\newcommand{\brax}[1]{\left( #1 \right)}
\newcommand{\abs}[1]{\left|#1\right|}
\newcommand{\C}{\mathbb{C}} 
\newcommand{\N}{\mathbb{N}}
\newcommand{\Z}{\mathbb{Z}}
\newcommand{\Mod}[1]{\ (\text{mod}\ #1)}
\renewcommand{\Mod}[1]{{\ifmmode\text{\rm\ (mod~$#1$)}\else\discretionary{}{}{\hbox{ }}\rm(mod~$#1$)\fi}}
\newtheorem{theorem}{Theorem}[section]
\newtheorem{prop}[theorem]{Proposition}
\newtheorem{lemma}[theorem]{Lemma}
\theoremstyle{definition} 
\newtheorem{remark}[theorem]{Remark}
\numberwithin{theorem}{section}
\begin{document}

\title{Improved estimates on the number of unit perimeter triangles }
\author{Ritesh Goenka}
\address{Department of Mathematics \\ University of British Columbia \\ Room 121, 1984 Mathematics Road \\ Vancouver, BC, Canada V6T 1Z2}
\email{rgoenka@math.ubc.ca}
\author{Kenneth Moore}
\address{Department of Mathematics \\ University of British Columbia \\ Room 121, 1984 Mathematics Road \\ Vancouver, BC, Canada V6T 1Z2}
\email{kjmoore@math.ubc.ca}
\author{Ethan Patrick White}
\address{Department of Mathematics \\ University of British Columbia \\ Room 121, 1984 Mathematics Road \\ Vancouver, BC, Canada V6T 1Z2}
\email{epwhite@math.ubc.ca}

\maketitle

\begin{abstract}
We obtain new upper and lower bounds on the number of unit perimeter triangles spanned by points in the plane. We also establish improved bounds in the special case where the point set is a section of the integer grid.
\end{abstract}

\section{Introduction}

A broad class of problems in extremal geometry can be characterized as follows. Fix a positive integer $k$ and a property of $k$-tuples of points in $\mathbb{R}^2$. The problem is then to determine how many $k$-tuples in an $n$-point set in $\mathbb{R}^2$ can have the chosen property. One of the most famous instances of this form of problem is the \emph{unit distance problem}, asked by Erd\H{o}s in 1946~\cite{ERD}. Here the special property is pairs of points being precisely distance one apart. Another well-studied open problem asked by Oppenheim in 1967 is to determine the maximum number of triples that determine a triangle of unit area~\cite[\S 6.2]{BMP}. The best known upper bounds for this problem were recently improved in~\cite{RS}.\\

In this work, we will focus on estimating the maximum number of unit perimeter triangles determined by $n$ points in the plane. This problem was first asked by Pach and Sharir, where they proved an upper bound of $O(n^{7/3})$~\cite{PaS2}. Later they improved their upper bound to $O(n^{16/7})$ and observed that a section of a triangular lattice spans $\Omega(n^{1+c/\log\log n})$ equilateral unit perimeter triangles~\cite{PaS}. In our main theorem, we report improvements to both estimates. \\

\begin{theorem}\label{mainT}
    The number of unit perimeter triangles spanned by $n$ points in the plane is $O(n^{9/4+\epsilon})$, for all $\epsilon>0$ with an implicit constant depending on $\epsilon$. Moreover, there exists a set of $n$ points in the plane that span $\Omega(n^{4/3})$ unit perimeter triangles. 
\end{theorem}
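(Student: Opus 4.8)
My plan rests on a reduction of the perimeter condition to an incidence count between the points and a family of ellipses. If a triangle $ABC$ has unit perimeter then, setting $d=|AB|$, the vertex $C$ lies on the ellipse $E_{A,B}=\{X:\,|XA|+|XB|=1-|AB|\}$ with foci $A,B$; conversely each point of $(E_{A,B}\cap P)\setminus\{A,B\}$ completes the pair $\{A,B\}$ to a unit perimeter triangle. Counting every triangle once for each of its three vertex pairs gives
\[
3\,\#\{\text{unit perimeter triangles in }P\}=\sum_{\{A,B\}\subseteq P}\bigl|(E_{A,B}\cap P)\setminus\{A,B\}\bigr|,
\]
so both estimates reduce to controlling the incidences between the $n$ points and the $\binom{n}{2}$ ellipses $E_{A,B}$.

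For the upper bound, observe that $\{E_{A,B}\}$ is a four-parameter family: a member is pinned down by its two foci, the string length being forced by the focal distance, so four generic points determine $O(1)$ of them. Feeding the $n$ points and $m=\binom{n}{2}\asymp n^2$ ellipses into the Pach--Sharir incidence theorem for curves with four degrees of freedom, via $I\lesssim n^{4/7}m^{6/7}+n+m$, reproduces the known bound $O(n^{16/7})$. To push past this, the plan is to exploit extra structure in the family rather than treating all $\binom{n}{2}$ ellipses uniformly. Grouping the pairs $\{A,B\}$ by their distance $d=|AB|$, dyadically in $d\in(0,1)$, the ellipses at a common distance are mutually congruent and hence constrained to a three-parameter subfamily, for which a sharper incidence estimate is available, while the unit distance theorem caps the number of pairs, and so of ellipses, realising any fixed distance by $O(n^{4/3})$. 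A refined incidence analysis along these lines---separating the ellipses that are rich in points from the rest and optimising the resulting trade-off---is what I expect to yield $O(n^{9/4+\epsilon})$, the $\epsilon$ absorbing the sub-polynomial and logarithmic losses from the decomposition and from the incidence bounds. I anticipate this refined estimate, together with the careful handling of its additive error terms across the many distance classes, to be the main obstacle.

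For the lower bound I would run the reduction backwards: it suffices to produce an $n$-point set for which the ellipses $E_{A,B}$ support $\Omega(n^{4/3})$ incidences, which is the extremal order of magnitude for incidences between points and a genuinely varying family of conics. A natural vehicle is a suitably scaled section of a lattice, counted through difference vectors: writing the two edges at a vertex as lattice vectors $\vec d,\vec e$, every translate $A\mapsto(A,\,A+\vec d,\,A+\vec e)$ lying inside the set is a congruent, hence equal-perimeter, copy, and there are $\Omega(n)$ such translates. It therefore suffices to exhibit $\Omega(n^{1/3})$ non-congruent shapes $(\vec d,\vec e)$ sharing a single perimeter, after which the lattice is rescaled so that this common perimeter equals $1$.

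The crux---and the step I expect to be hardest---is that lattice side lengths are square roots of integers, so by linear independence of these surds over the rationals a common perimeter cannot be manufactured by a mere averaging argument: distinct shapes almost never collide in perimeter. The construction must therefore force such coincidences by design. I would do this by restricting to triangles whose three side lengths share a common square-free part, so that each perimeter becomes a rational multiple of a single surd and collisions are governed by the additive structure of the integer parts, and then by tailoring the shape of the point set so that enough shapes concentrate on one perimeter value. Arranging that this produces as many as $\Omega(n^{1/3})$ distinct shapes, and hence $\Omega(n^{4/3})$ triangles, is the delicate heart of the construction.
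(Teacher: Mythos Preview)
Your ellipse reduction is correct and matches the paper, but both halves of your plan have real gaps.

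\textbf{Upper bound.} The paper does not decompose by focal distance at all. It simply observes that the $\binom{n}{2}$ ellipses lie in a $4$-dimensional algebraic family (all ellipses form a $5$-dimensional family, and the constraint ``focal distance plus major axis $=1$'' cuts this down by one) and plugs $s=4$, $m=n$ points, and $\sim n^2$ curves into the Sharir--Zahl incidence bound
\[
I \;=\; O\!\left(m^{\frac{2s}{5s-4}}\,N^{\frac{5s-6}{5s-4}+\epsilon}\right) + O\!\left(m^{2/3}N^{2/3}+m+N\right),
\]
which for $s=4$ gives $n^{1/2}(n^2)^{7/8+\epsilon}=n^{9/4+\epsilon}$ in one line. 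Your proposed route---grouping by $d=|AB|$ and invoking a $3$-parameter bound on each slice---does not obviously close: within a dyadic band the ellipses are not congruent, so you do not actually land in a $3$-dimensional family; and at \emph{exact} distances you may have up to $\binom{n}{2}$ classes, so summing a concave incidence bound over them costs you back what the per-class improvement gained. The missing ingredient is precisely the Sharir--Zahl theorem for $s$-dimensional families, which supersedes Pach--Sharir and makes the decomposition unnecessary.

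\textbf{Lower bound.} Here your approach is genuinely different from the paper's, and the obstacle you flag is a real one that you have not overcome. The paper does \emph{not} use a lattice. Instead it takes an $n$-point, $n$-line Szemer\'edi--Trotter configuration with $\Omega(n^{4/3})$ incidences, applies $z\mapsto z^2$ over $\mathbb{C}$ (sending each line to a parabola focused at the origin), then applies $z\mapsto z/(1+|z|)$ (sending each such parabola to an ellipse with one focus at the origin and internally tangent to the unit circle, so that any triangle on the ellipse with the two foci has perimeter exactly $2$). Adjoining the origin and the $n$ second foci gives $2n+1$ points with $\Omega(n^{4/3})$ perimeter-$2$ triangles. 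Your lattice plan, by contrast, requires $\Omega(n^{1/3})$ non-congruent lattice shapes with a common perimeter; but the paper's own analysis of the $\sqrt{n}\times\sqrt{n}$ grid (its Theorem~1.2 and Lemma~3.3) only extracts $\Omega(n^{1/4})$ such shapes via Heronian triangles, yielding $\Omega(n^{5/4})$ triangles in total---strictly weaker than $n^{4/3}$. So the ``delicate heart'' you identify is not just delicate; the paper in fact sidesteps it entirely with the conformal-map trick, and there is no indication that a lattice construction can reach $n^{4/3}$.
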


The upper bound given in Theorem~\ref{mainT} follows from an earlier argument of Pach and Sharir~\cite{PaS} combined with an improved incidence estimate of Sharir and Zahl~\cite{SZa}. Our lower bound construction uses a map of complex space to transfer a point-line incidence structure to a set of points such that a single point will belong to $\Omega(n^{4/3})$ unit perimeter triangles. We discuss the proof in Section~\ref{transformation}. \\

Grids and lattice structures are natural candidates for both the unit perimeter and unit area triangle problems. The best known construction in unit area triangle problem is a section of a scaled integer lattice~\cite{EP}. Raz, Sharir, and Shkredov improved estimates for the number of unit area triangles when the point set is of the form $A \times B$ where $A$ and $B$ are equal sized convex sets~\cite{RSS}. \\

We show that the number of equal perimeter triangles spanned by $[\lfloor \sqrt{n} \rfloor] \times [\lfloor \sqrt{n}\rfloor]$ is greater than the previous known lower bounds of this problem, but far fewer than the current best upper bound given in Theorem~\ref{mainT}. 

\begin{theorem}\label{gridT}
    The maximum number of equal perimeter triangles spanned by $[\lfloor \sqrt{n} \rfloor] \times [\lfloor \sqrt{n}\rfloor]$ is $O(n^{3/2+O\brax{\frac{1}{\log \log n}}})$ and $\Omega(n^{5/4})$.
\end{theorem}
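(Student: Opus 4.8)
My plan is to treat the two bounds separately, writing $m=\lfloor\sqrt n\rfloor$ for the side length of the grid $G=[m]\times[m]$ and fixing the perimeter $p$ that maximizes the count. For the upper bound I would first reduce to counting triangle \emph{shapes}. Writing a triangle as a base point $A$ together with the two edge vectors $u=B-A$ and $v=C-A$, the number of perimeter-$p$ triangles is at most $n\cdot S(p)$, where $S(p)=\#\{(u,v)\in(\Z^2)^2:\|u\|+\|v\|+\|u-v\|=p\}$ counts shapes up to translation and there are at most $n$ choices of $A$. Thus it suffices to prove $S(p)\le m^{1+O(1/\log\log m)}$, which yields $n\cdot S(p)=m^{3+O(1/\log\log m)}=n^{3/2+O(1/\log\log n)}$.

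To bound $S(p)$ I would group shapes by the triple of squared side lengths $(D_1,D_2,D_3)=(\|u\|^2,\|v\|^2,\|u-v\|^2)\in\Z^3$ and exploit the $\Q$-linear independence of $\{\sqrt f:f\text{ squarefree}\}$. Writing $D_i=f_is_i^2$ with $f_i$ squarefree, the identity $\sqrt{D_1}+\sqrt{D_2}+\sqrt{D_3}=p$ forces, for each squarefree $f$, the quantity $\sum_{i:f_i=f}s_i$ to be determined by $p$. This splits the count by how many distinct squarefree parts occur. When all three parts are distinct the triple is determined up to order, and when exactly two coincide there are $O(m)$ admissible triples; in both cases each triple has at most $r_2(D_1)\le m^{O(1/\log\log m)}$ realizations (choose $u$ among the vectors of length $\sqrt{D_1}$, after which $v$ is pinned to at most $2$ points), so these regimes contribute $m^{1+O(1/\log\log m)}$. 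The delicate regime is when all three squared lengths share one part $f$, i.e.\ the triangle is $\sqrt f$ times an integer-sided (Heronian) triangle; here $f$ is the unique squarefree integer with $p/\sqrt f\in\Z$ and there are up to $\Omega(m^2)$ candidate triples, far too many to enumerate. I would instead fix the first edge $u$ — there are only $m^{1+O(1/\log\log m)}$ lattice vectors whose squared length $\|u\|^2$ equals $f$ times a perfect square — and observe that the opposite vertex lies on the ellipse with foci $A,A+u$ and string length $p-\|u\|$. Bounding the number of lattice points on this conic (whose integer coefficients are polynomial in $m$) by $m^{O(1/\log\log m)}$ then gives $S(p)\le m^{1+O(1/\log\log m)}$. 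This commensurable regime is the main obstacle for the upper bound: it is exactly where the extremal examples live, and where the representation-function loss $m^{O(1/\log\log m)}$ enters.

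For the lower bound I would exhibit a single perimeter realized by polynomially many non-congruent lattice triangles and translate each of them. The idea is to build Heronian triangles from representations of a square as a sum of three squares: if $M^2=a^2+b^2+c^2$ with $a,b,c\ge1$, then the triangle with side lengths $(M^2-a^2,\,M^2-b^2,\,M^2-c^2)$ has perimeter $2M^2$ and area $Mabc$, hence is Heronian and therefore embeds in $\Z^2$ by the classical fact that every Heronian triangle is a lattice triangle. Distinct multisets $\{a,b,c\}$ produce distinct side-length multisets, hence non-congruent triangles, so the number of shapes of perimeter $2M^2$ is $\Omega(r_3(M^2))$. Choosing $M\le\tfrac12\sqrt m$, every such triangle has all sides at most $M^2\le m/4$, so its lattice embedding has diameter at most $m/4$ and can be placed in $G$ in $\Omega(n)$ translated positions.

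It remains to select $M$ so that $r_3(M^2)$ is large. Summing over the range, $\sum_{M\le Y}r_3(M^2)$ counts Pythagorean quadruples $a^2+b^2+c^2=M^2$ with $M\le Y$, which is $\Omega(Y^2)$; hence some $M\le Y=\tfrac12\sqrt m$ satisfies $r_3(M^2)=\Omega(Y)=\Omega(\sqrt m)$ (representations with a zero coordinate number only $m^{o(1)}$ and are negligible). For this $M$ I obtain $\Omega(\sqrt m)$ non-congruent Heronian triangles of perimeter $2M^2$, each in $\Omega(n)$ positions, for a total of $\Omega(\sqrt m\cdot n)=\Omega(n^{5/4})$ equal-perimeter triangles. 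The main difficulty here is the concentration phenomenon — forcing a single perimeter to be shared by $\Omega(n^{1/4})$ genuinely distinct triangles — which the three-squares identity resolves cleanly, together with the input that Heronian triangles embed in the integer lattice.
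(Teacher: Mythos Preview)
Your argument is correct and follows the same architecture as the paper: for the upper bound, both you and the paper split by the squarefree parts of the squared side lengths using $\Q$-linear independence of radicals, and in the commensurable case invoke a divisor-type bound on lattice points on a conic; for the lower bound, both use the three-squares identity $a^2+b^2+c^2=M^2 \Rightarrow (M^2-a^2,M^2-b^2,M^2-c^2)$ is Heronian, then Yiu's theorem that Heronian implies lattice, then translate. The only differences are in execution. In the commensurable upper-bound case the paper substitutes Heron's formula to obtain the explicit binary form $k^2+cx^2=cb_1^2$ and cites a bound on its integer solutions, whereas you fix the edge vector $u$ and bound lattice points on the resulting integer-coefficient ellipse directly; these are equivalent reductions to the same divisor estimate. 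For the lower bound the paper uses the pointwise Hurwitz--Olds inequality $r_3((2k-1)^2)\ge 6(2k-1)$, giving an explicit perimeter for every $k$, while you average $\sum_{M\le Y} r_3(M^2)=\Omega(Y^2)$ and pigeonhole; the paper's route is slightly cleaner since it avoids needing the quadruple count and yields an explicit extremal perimeter, but both arrive at the same $\Omega(\sqrt m)$ shapes.
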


The proof of Theorem~\ref{gridT} is discussed in Section~\ref{gridsec}. The upper bound follows from Proposition~\ref{prop:upper} and the lower bound follows from Proposition~\ref{prop:lower}. The key idea of the lower bound is that triangles of integer sidelengths and area can always be realized by the integer lattice.

\section{Proof of Theorem~\ref{mainT}}\label{transformation}

\noindent\textbf{Upper bound.} The idea behind Pach and Sharir's $O(n^{16/7})$ bound is the observation that for a fixed pair of points $a,b \in \mathbb{R}^2$, the set of points that form a unit perimeter triangle with $a$ and $b$ is an ellipse with foci $a,b$ and major axis length $1-|ab|$. Therefore, up to a multiplicative constant, the number of unit perimeter triangles form by $n$ points is equal to the number of incidences between $\binom{n}{2}$ ellipses and $n$ points. \\

Pach and Sharir bound these incidences using their own well-known incidence bound~\cite{PaS3}. We will use Sharir and Zahl's incidence bound, which relies on the following definition. An \emph{$s$-dimensional family of plane curves of degree at most $D$} is an algebraic variety $F \subset \textbf{P}\mathbb{R}^{\binom{D+2}{2}}$ that has dimension $s$. Theorem~1.3 of~\cite{SZa} states that the number of incidences between $m$ points and $n$ algebraic curves belonging to an $s$-dimensional family, no two sharing a common irreducible component is 
\begin{equation}\label{SZbound} O(m^{\frac{2s}{5s-4}}n^{\frac{5s-6}{5s-4}+\epsilon})+O(m^{2/3}n^{2/3} + m+n),\end{equation}
where $\epsilon>0$ is arbitrary, and the implicit constants above depend on $\epsilon$, and several parameters associated to the family of curves. \\

The set of all ellipses is a 5-dimensional family. The set of $\binom{n}{2}$ ellipses we construct all have the property that the sum of the distance between the foci and the major axis length is 1. This property can be expressed as an algebraic constraint on the coefficients of the ellipse, and so our ellipse set is a 4-dimensional family. Substituting values into \eqref{SZbound} gives the upper bound stated in Theorem~\ref{mainT}.\\

\noindent\textbf{Lower bound.} Our construction can be sketched in four steps as follows. 

\begin{enumerate}[label=(\arabic*),leftmargin=4\parindent]
    \item Construct a configuration of $n$ lines and $n$ points in $\mathbb{R}^2$ with $\Omega(n^{4/3})$ point-line incidences. By suitable translation, we will assume that no two lines (not necessarily distinct) are reflections of each other through the origin and all points are in the upper halfplane.
    \item Now identify $\mathbb{R}^2$ with $\mathbb{C}$ and apply the map $z \mapsto z^2$. The $n$ lines will be mapped to parabolas with focus at the origin. 
    \item Apply the map $z \mapsto \frac{z}{1+|z|}$. All parabolas from (2) are now mapped to ellipses with one focus at the origin, all tangent to the unit circle. Note that all triangles formed by the pair of foci of one such ellipse, and a point on the ellipse, have perimeter exactly 2. 
    \item At this stage we have $n$ points and $n$ ellipses with $\Omega(n^{4/3})$ point-ellipse incidences. We next enlarge our point set to include the origin, and the second foci of each ellipse. This is a total of at most $2n+1$ points, giving $\Omega(n^{4/3})$ triangles with perimeter exactly 2. 
\end{enumerate}

It remains to prove the claims in steps (2) and (3) above. We will do so in two lemmas.

\begin{lemma}
Let $f_1:\C \to \C$, be defined by $f_1(z)= z^2$. Then the image under $f_1$ of a line not through the origin is a parabola with a focus at the origin.
\label{lem:step2}
\end{lemma}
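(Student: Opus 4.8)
The plan is to compute the image of a general line under $f_1(z) = z^2$ directly and recognize the defining equation of a parabola. First I would parametrize a line not passing through the origin in the form $\{p + t v : t \in \R\}$, where $p, v \in \C$ with $v \neq 0$ and $p \notin \R v$ (the latter encoding that the line misses the origin). Squaring gives $f_1(p+tv) = p^2 + 2ptv + t^2 v^2$, a quadratic curve traced as $t$ ranges over $\R$. Since the parametrization is polynomial of degree two in $t$ with leading coefficient $v^2 \neq 0$, the image is a (possibly degenerate) parabola; the real work is to pin down its focus.

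A cleaner route, which I expect to be the main idea, is to use the classical focus–directrix characterization of the parabola together with a geometric property of the squaring map. Writing $z = re^{i\theta}$, we have $z^2 = r^2 e^{2i\theta}$, so $f_1$ doubles arguments and squares moduli. The key observation is that for a point $w$ on the image, $|w| = |z|^2$ equals the square of the distance from $z$ to the origin, while the line structure should translate into a linear constraint. Concretely, I would write the original line in the normal form $\langle z, u \rangle = c$ for a unit vector $u \in \C$ and $c > 0$ (possible precisely because the line avoids the origin), meaning $\mathrm{Re}(\bar u z) = c$. The aim is to show the image points $w = z^2$ satisfy $|w| = \mathrm{Re}(\overline{u^2}\, w)/c + c$ or a similar relation equating distance-to-origin with a signed distance to a fixed line, which is exactly the focus–directrix equation $\mathrm{dist}(w, O) = \mathrm{dist}(w, \ell)$ with focus at the origin.

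To carry this out I would substitute $z$ satisfying $\mathrm{Re}(\bar u z) = c$ into $|z^2| = |z|^2$ and the inner product $\mathrm{Re}(\overline{u^2} z^2)$, using the identity $\mathrm{Re}(\overline{u^2} z^2) = 2(\mathrm{Re}(\bar u z))^2 - |z|^2 = 2c^2 - |w|$ (which follows from $\mathrm{Re}(a^2) = 2(\mathrm{Re}\,a)^2 - |a|^2$ applied to $a = \bar u z$). Rearranging yields $|w| = 2c^2 - \mathrm{Re}(\overline{u^2} w)$, which is precisely a focus–directrix relation: the locus of points $w$ whose distance to the origin equals their distance to the line $\{w : \mathrm{Re}(\overline{u^2} w) = 2c^2\}$, scaled appropriately. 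This exhibits the image as a parabola with focus at the origin.

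The main obstacle is handling the degenerate and orientation subtleties: the squaring map is two-to-one away from the origin, so I must check that the full image (not merely a half) is obtained and that no fold or self-intersection spoils the parabola, and I must confirm the excluded case $p \in \R v$ (line through origin) is genuinely where the argument breaks, since then $c = 0$ and the focus–directrix balance degenerates to a ray. I would also verify the nondegeneracy of the directrix line, i.e.\ that $u^2 \neq 0$, which is immediate from $|u| = 1$. Once the identity $\mathrm{Re}(\bar u z)^2$ step is in hand, the rest is routine algebra, so the crux is really the clean derivation of that single trigonometric identity for the squaring map.
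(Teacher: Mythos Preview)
Your approach is correct and reaches the same conclusion as the paper, but the execution differs. The paper parametrizes the line as $\ell(t) = re^{i\theta}(1+it)$ (with $re^{i\theta}$ the foot of the perpendicular from the origin), squares to obtain $\gamma_1(t) = r^2 e^{2i\theta}(1 - t^2 + 2it)$, then \emph{proposes} the directrix $\ell_D(t) = 2r^2 e^{2i\theta}(1+it)$ and verifies by direct computation that $|\gamma_1(t_0)| = r^2(1+t_0^2)$ equals the distance from $\gamma_1(t_0)$ to $\ell_D$. Your argument instead \emph{derives} the directrix: from the normal form $\mathrm{Re}(\bar u z) = c$ and the identity $\mathrm{Re}(a^2) = 2(\mathrm{Re}\,a)^2 - |a|^2$ you obtain $|w| = 2c^2 - \mathrm{Re}(\overline{u^2} w)$ in one line, and since $|u^2| = 1$ the right-hand side is already the (nonnegative) distance from $w$ to the line $\mathrm{Re}(\overline{u^2} w) = 2c^2$, so no further scaling is needed. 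Your version is a bit more economical and explains where the directrix comes from; the paper's explicit parametrization $\gamma_1(t)$, on the other hand, makes it transparent that the image is the \emph{entire} parabola (a continuous unbounded curve lying on a parabola), which addresses exactly the surjectivity concern you flag. The two-to-one worry is moot in either approach: a line missing the origin contains no antipodal pair $\{z,-z\}$, so $f_1$ is injective on it.
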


\begin{proof} Any line not containing the origin can be parameterized by 
\begin{equation}\label{lineeq}
 \ell(t) = re^{i\theta} \brax{1+it},
\end{equation}
where $re^{i\theta}$ is the point closest to the origin on the line. The image of $\ell(t)$ under the map $f_1$ is the curve 
\begin{equation*}
 \gamma_1(t)   = r^2e^{2i\theta} \brax{1-t^2 + 2 it}.
\end{equation*}

We will show that each point of $\gamma_1$ is equidistant to the origin and the line
\begin{equation}
\label{eq:directrix}
\begin{split}
\ell_D(t) = 2r^2e^{2i\theta}\brax{1 + it}.
\end{split}
\end{equation}

Fix any $t_0 \in \mathbb{R}$. The distance from $\gamma_1(t_0)$ to the origin is $r^2(1+t_0^2)$. On the other hand, we compute that 
\[ \abs{\ell_D(t)-\gamma_1(t_0)} = r^2\abs{ 1+t_0^2 + 2i(t-t_0)}, \]
which is minimized when $t=t_0$, matching the distance to the origin. Hence $\gamma_1$ is a parabola with directrix $\ell_D$ and focus at the origin. 
\end{proof}

After applying the map $z \mapsto z^2$ to $\mathbb{C}$ our incidence structure becomes a family of $n$ parabolas and $n$ points with $\Omega(n^{4/3})$ incidences. The condition that all points are in the upper halfplane guarantees the points remain distinct after applying $f_1$. Furthermore, since no pair of lines are a reflection of each other through the origin, all directrices of \eqref{eq:directrix} are also distinct.

\begin{lemma}
Let $f_2:\C\mapsto \C$, be defined by $f_2(z)= \frac{z}{1+\abs{z}}$. Then the image of a parabola with focus at the origin is an ellipse, with one focus at the origin, tangent to the unit circle.
\end{lemma}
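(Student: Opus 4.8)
The plan is to work in polar coordinates centered at the common focus, the origin, since $f_2$ acts especially simply there: writing $z = \rho e^{i\phi}$, we have $f_2(z) = \frac{\rho}{1+\rho}e^{i\phi}$, so $f_2$ fixes the argument $\phi$ and sends the radius by $\rho \mapsto \frac{\rho}{1+\rho}$. Thus it suffices to track how the polar equation of the curve transforms under this radial map. As a first step I would record the polar equation of the parabola: a parabola with focus at the origin has, for a suitable semi-latus rectum $\ell > 0$ and axis direction $\phi_0$, the equation $\rho(\phi) = \frac{\ell}{1 + \cos(\phi - \phi_0)}$. This is the eccentricity-one case of the focus--directrix equation of a conic, and it is consistent with the parametrization in Lemma~\ref{lem:step2}, where $\abs{\gamma_1(t_0)} = r^2(1+t_0^2)$ gives the radial distance to the focus at the origin.

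Next I would substitute into the radial action of $f_2$. Setting $R(\phi) = \frac{\rho(\phi)}{1+\rho(\phi)}$ and simplifying yields $R(\phi) = \frac{\ell/(1+\ell)}{1 + \frac{1}{1+\ell}\cos(\phi-\phi_0)}$. This is precisely the focus--directrix polar equation of a conic with one focus at the origin, semi-latus rectum $\ell/(1+\ell)$, and eccentricity $e = \frac{1}{1+\ell}$. Since $\ell > 0$ forces $e \in (0,1)$, the image curve is an ellipse with a focus at the origin. For tangency to the unit circle, I would observe that $R(\phi)$ is maximized when $\cos(\phi-\phi_0) = -1$, where it equals $\frac{\ell/(1+\ell)}{1 - 1/(1+\ell)} = 1$, while $R(\phi) < 1$ for every other $\phi$. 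Hence the ellipse meets $\{\abs{z}=1\}$ only at the point $e^{i(\phi_0+\pi)}$ and otherwise lies strictly inside the closed unit disk, so it is tangent to the unit circle there.

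The step I expect to require the most care — more conceptual than computational — is the mismatch between an unbounded parabola and a bounded ellipse. As $\rho \to \infty$ along either end of the parabola, the relation $\cos(\phi-\phi_0) \to -1$ forces the argument to tend to $\phi_0 + \pi$, so both ends of the parabola are carried by $f_2$ to the single boundary point $e^{i(\phi_0+\pi)}$, which is exactly the apoapsis and the point of tangency. Consequently the image of the parabola is the ellipse with this one point removed, and it is the closure of the image (equivalently, the image after adjoining the parabola's point at infinity) that constitutes the full ellipse. I would state this explicitly so that the identification of $f_2$ of the parabola with a genuine ellipse tangent to the unit circle is unambiguous.
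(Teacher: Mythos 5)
Your proof is correct, but it takes a genuinely different route from the paper. The paper fixes the axis by rotational symmetry, writes down the explicit complex parametrization $\gamma_2(t) = \frac{r^2(1+it)^2}{1+r^2(1+t^2)}$, guesses the second focus $q=-\frac{1}{1+r^2}$, and verifies the two-focus characterization by checking that $\abs{\gamma_2(t)-q}+\abs{\gamma_2(t)}$ equals the constant $\frac{1+2r^2}{1+r^2}$; tangency is then read off from $\gamma_2(t)\to -1$. You instead exploit the fact that $f_2$ acts purely radially in polar coordinates about the focus and push the focus--directrix equation $\rho = \frac{\ell}{1+\cos(\phi-\phi_0)}$ through the map, landing directly on $R=\frac{\ell/(1+\ell)}{1+\frac{1}{1+\ell}\cos(\phi-\phi_0)}$ with eccentricity $\frac{1}{1+\ell}\in(0,1)$. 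Your computation is shorter, explains conceptually why $f_2$ is the right map (it shrinks eccentricity while preserving the focal ray structure, and in fact sends any focal conic to another focal conic), and your tangency argument via $\max_\phi R(\phi)=1$ is cleaner and pins down the unique contact point; your explicit handling of the removed point at $\phi=\phi_0+\pi$ is a subtlety the paper passes over silently. What the paper's computation buys in exchange is the explicit location of the second focus and the explicit value of the focal-distance sum, both of which are used immediately afterwards (the second foci are adjoined to the point set in step (4), and the perimeter-$2$ claim rests on $a+c=1$); if you wanted your version to feed seamlessly into the construction you would add one line extracting the second focus $-\frac{\ell}{2+\ell}\,e^{i\phi_0}$ (equivalently $2ae$ along the axis) from the polar data.
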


\begin{proof}
From rotational symmetry, it is enough to verify the lemma for the parabola with directrix $\ell_D(t) = 2r^2(1+it)$, i.e., \eqref{eq:directrix} with $\theta = 0$. We will show the second focus is $q = -\frac{1}{1+r^2}$, and verify that the distances to the foci sum to a constant. The image of the parabola corresponding to $\ell_D$ is
\[ \gamma_2(t) = \frac{r^2 \brax{1+it}^2}{1+r^2(1+t^2)}. \]
A short computation shows that 
\begin{align*}
\abs{\gamma_2(t)-q} & = \frac{1}{(1+r^2(1+t^2))(1+r^2)}\abs{r^2(1+it)^2(1+r^2)+1+r^2(1+t^2)}\\
& = \frac{1}{(1+r^2(1+t^2))(1+r^2)}\abs{1+r^2+ir^2t}^2\\
& = \frac{(1+r^2)^2+r^4t^2}{(1+r^2(1+t^2))(1+r^2)}.
\end{align*}
Therefore the sum of the distances to our proposed foci is
\begin{align*}
    \abs{\gamma_2(t)-q} + \abs{\gamma_2(t)} & = \frac{(1+r^2)^2+r^4t^2}{(1+r^2(1+t^2))(1+r^2)} + \frac{r^2 \brax{1+t^2}}{1+r^2(1+t^2)}\\
    & = \frac{(1+2r^2)(1+r^2(1+t^2))}{(1+r^2(1+t^2))(1+r^2)} = \frac{1+2r^2}{1+r^2}.
\end{align*}

Since there is no dependence on $t$, the distance is constant, and therefore $\gamma_2$ describes an ellipse. It is clear that $\gamma_2$ lies entirely inside the unit disk, and that as $|t|\to\infty$, we have $\gamma_2(t) \to -1$, hence the tangency condition holds as well.
\end{proof}

\begin{figure}[h!]
    \centering
    \includegraphics[scale=0.5]{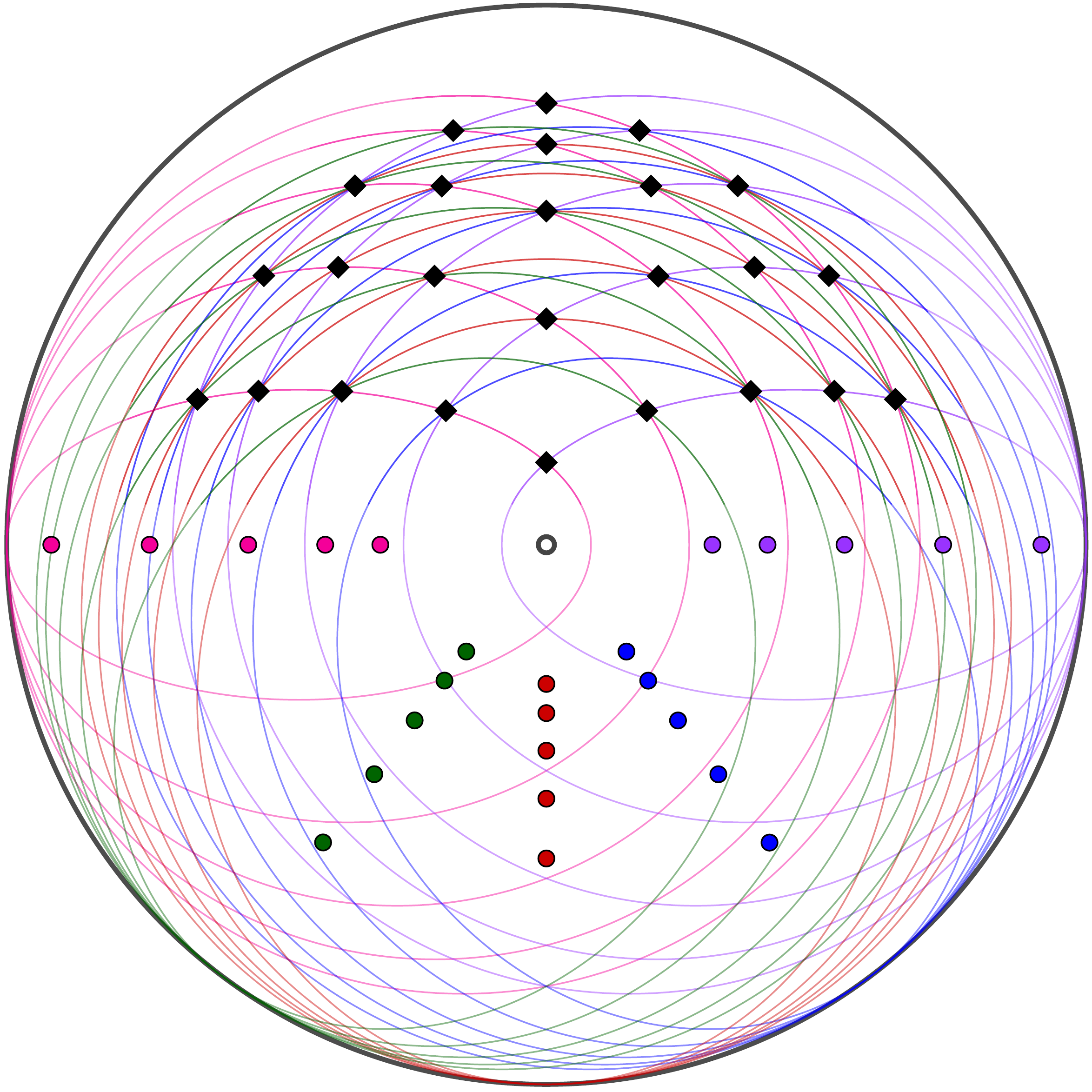}
    \caption{A $51$ point configuration forming $95$ unit perimeter triangles.}
    \label{fig:version3}
\end{figure}

An ellipse with one focus at the origin and tangent to the unit circle has the property that any triangle formed by the two foci and a point on the ellipse has perimeter exactly 2. Through this second map, we obtain $n$ ellipses and $n$ points with $\Omega(n^{4/3})$ point-curve incidences. Add to the point set the origin and each of the second foci of the ellipses, which are $n$ in number, for a total of at most $2n+1$ points in our set. This set determines $\Omega( n^{4/3})$ triangles with perimeter exactly 2. \\

In Figure~\ref{fig:version3}, we show an instance of this construction with $n=25$, and a total of $95$ unit perimeter triangles. The `square' points are the $f_2 \circ f_1$ image of a scaled $5 \times 5$ grid, the ellipses are the image of 5 familes of parallel lines, and the round points are the foci of the ellipses. 

\begin{remark}
    The map $z \mapsto \sqrt{z}$ can also yield interesting point-conic incidence configurations. In a similar fashion to the proof of Lemma \ref{lem:step2}, one can show that this map sends each line to one arc of a rectangular hyperbola.
\end{remark}

\section{Grid Construction}\label{gridsec}

We now turn to estimating the number of unit perimeter triangles in the grid $[\lfloor \sqrt{n} \rfloor] \times [\lfloor \sqrt{n}\rfloor]$. The following notation and terminology will be used. Let $\mathbf{d}(n)$ denote the number of positive divisors of $n$. Whenever we refer to a triangle, it is assumed to be non-degenerate. A triangle is a \emph{lattice triangle} if it can be realized with vertices in the integer lattice $\Z^2$. Further, a triangle is said to be \emph{Heronian} if its sidelengths and area are integers. For $n \in \N$, let $H(n)$ denote the number of non-congruent Heronian triangles with perimeter $n$.

\begin{lemma}
\label{prop:cong}
    For $p > 0$, the number of non-congruent lattice triangles with perimeter $p$ is $O\brax{p^{1+O\brax{\frac{1}{\log \log p}}}}$.
\end{lemma}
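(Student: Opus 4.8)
The plan is to pass from the geometry of lattice triangles to a purely number-theoretic counting problem. Since two triangles are congruent exactly when they share the same unordered triple of side lengths, a congruence class of lattice triangle is recorded faithfully by its multiset of squared side lengths $\{d_1,d_2,d_3\}$, where each $d_i \in \Z_{>0}$ is a squared distance between lattice points. Thus it suffices to bound the number of multisets $\{d_1,d_2,d_3\}$ of positive integers that are realizable as a lattice triangle and satisfy $\sqrt{d_1}+\sqrt{d_2}+\sqrt{d_3}=p$.

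First I would control the \emph{shape} of such representations using the linear independence of $\{\sqrt b : b \text{ squarefree}\}$ over $\Q$. Writing $d_i = a_i^2 b_i$ with $b_i$ squarefree and collecting equal squarefree parts, every representation takes the form $p = \sum_{b\in S} c_b \sqrt b$ with $c_b \in \Z_{>0}$ and $|S| \le 3$. By linear independence this grouped expansion is \emph{unique}, so the set $S$ and the coefficients $c_b$ are determined by $p$ alone; a given $p$ therefore falls into exactly one of the cases $|S| \in \{1,2,3\}$. When $|S|=3$ the three terms are forced, giving a single multiset; when $|S|=2$ the three square roots must split the two groups as $(1,2)$ or $(2,1)$, and partitioning a coefficient $c_b \le p$ into two positive parts yields only $O(p)$ multisets. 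In both cases we are already within the desired bound, and realizability is not needed.

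The essential case is $|S|=1$, where $p = c\sqrt b$ and the squared side lengths are $a_1^2 b,\, a_2^2 b,\, a_3^2 b$ with $a_1+a_2+a_3 = c$. Here there are $\sim c^2$ multisets with the correct perimeter, so realizability must be exploited. Placing a lattice realization at the origin as $0,u,v \in \Z[i]$, a short computation gives $2T = \abs{\operatorname{Im}(u\bar v)} \in \Z$ for the area $T$; since scaling by $\sqrt b$ multiplies area by $b$, the integer-sided triangle with sides $a_1,a_2,a_3$ has rational area, and hence (by Heron's formula together with the classical fact that an integer-sided triangle with rational area is Heronian) integer area. Consequently the number of admissible multisets in this case is at most $H(c)$, the number of Heronian triangles of perimeter $c$.

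It remains to bound $H(c)$, and this is the crux of the argument. I expect the main obstacle to be the number-theoretic estimate $H(c) = O\brax{c^{1+O(1/\log\log c)}}$: parametrizing a Heronian triangle by dropping an altitude decomposes it into two integer right triangles, turning the perimeter constraint into a divisor-counting problem whose solution count is governed by the maximal order of the divisor function, namely $\mathbf{d}(m) = m^{O(1/\log\log m)}$. Granting this estimate and using $c \le p$ and $b \le p^2$, the three cases combine to give the claimed bound $O\brax{p^{1+O(1/\log\log p)}}$.
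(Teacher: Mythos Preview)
Your reduction via linear independence of $\{\sqrt b : b \text{ squarefree}\}$ and the case split on $|S|$ is exactly what the paper does, and your treatment of $|S|=2,3$ matches theirs. In the main case $|S|=1$ the paper takes a slightly more direct route: rather than scaling down to the integer-sided triangle and invoking the classical ``rational area $\Rightarrow$ Heronian'' fact, it applies Pick and Heron to the \emph{original} lattice triangle, fixes one side $b_1$ (at most $p$ choices), and rewrites the remaining constraint as the Diophantine equation $k^2 + c x^2 = c b_1^2$, whose number of solutions is $O(\mathbf d(64 b_1^2 c^2)) = p^{O(1/\log\log p)}$.

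One small inaccuracy in your sketch of $H(c)$: dropping an altitude in a Heronian triangle produces two right triangles with \emph{rational} (not integer) legs, so you do not get integer right triangles directly. The decomposition can still be pushed through after clearing denominators, but the cleanest fix is exactly the paper's device---fix one side, use Heron to obtain a binary quadratic equation in the remaining data, and bound its solutions by the divisor function---which proves the $H(c)=O\brax{c^{1+O(1/\log\log c)}}$ estimate you need without the altitude detour.
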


\begin{proof} The square distance of any side of a lattice triangle is a positive integer. We may assume that the perimeter $p$ is realized by at least one lattice triangle. Let $a_i\sqrt{d_i}$ for $i=1,2,3$ be the sidelengths of a triangle with perimeter $p$, where $a_i,d_i \in \mathbb{N}$ and $d_i$ are squarefree for $i=1,2,3$. Suppose $b_i\sqrt{e_i}$ are the sidelengths of a different triangle also with perimeter $p$. A folklore result shows that the set of all square roots of squarefree natural numbers is linearly independent over the rationals~\cite{bore,CaS}. Hence since
\begin{equation}\label{equalp} a_1\sqrt{d_1}+a_2\sqrt{d_2}+a_3\sqrt{d_3}=b_1\sqrt{e_1}+b_2\sqrt{e_2}+b_3\sqrt{e_3}, \end{equation}
we must have that $\{d_1,d_2,d_3\} = \{e_1,e_2,e_3\}$. Note that if $d_i$ for $i=1,2,3$ are distinct then \eqref{equalp} can only hold if $\{a_i\sqrt{d_i}\}\}_{i=1}^3 =\{b_i\sqrt{e_i}\}\}_{i=1}^3$, i.e., there is a unique triangle with perimeter $p$. Similarly, if $d_1 = d_2 \neq d_3$ then without loss of generality we may assume $d_1 = e_1 = e_2$ and $d_3 = e_3$. This implies $a_3 = b_3$ and $a_1+a_2 = b_1+b_2$. Since $a_1+a_2 < p$, the number of solutions $(b_1,b_2,b_3)$ to \eqref{equalp} is less than $p$. \\

Finally, we consider the most interesting case: $\{d_1\} =\{d_1,d_2,d_3\} = \{e_1,e_2,e_3\}$. Let $a_1+a_2+a_3 = b_1+b_2+b_3= m$. By Pick's Theorem~\cite{Pic}, the area of a lattice triangle is a half-integer. For some $k \in \mathbb{N}$, let $k/2$ be the area of the triangle with sidelengths $\{b_i\sqrt{e_i}\}_{i=1}^3$. By Heron's area formula we have 
\begin{equation}\label{heron}
    d_1^2m(m-2b_1)(m-2b_2)(m-2b_3) = 4k^2.
\end{equation}
From \eqref{heron}, we see that $d_1m$ is even. We will consider \eqref{heron} as a Diophantine equation, and upper bound the number of solutions for a fixed choice of $0<b_1<m/2$. Put $x = b_2-b_3$ and let $c = d_1^2m(m-2b_1)/4\in \mathbb{N}$. Equation \eqref{heron} becomes
\begin{equation}\label{he_dio}
 k^2+cx^2 = cb_1^2.
\end{equation}
Each integer solution $(k,x)$ to the above determines at most one unique triangle with perimeter $p$. From \cite[Theorem~2]{Ver} the number of integer solutions to \eqref{he_dio} is bounded above by $24 \textbf{d}(64b_1^2c^2)$. Since $b_1\sqrt{d_1}<p/2$ and $c < d_1p^2/4$ we conclude $64b_1^2c^2 < p^8$. From \cite[Theorem~1]{NiR} we have that 
\[ \log \textbf{d}(64b_1^2c^2) \leq \frac{2\log p^8}{\log \log p^8},  \]
from which we derive $24 \textbf{d}(64b_1^2c^2) = O(p^{16/\log\log p})$. Since there are less than $p$ choices for $b_1$, the stated bound follows.
\end{proof}

The following proposition gives the upper bound on the number of equal perimeter triangles in grids stated in Theorem~\ref{gridT}.

\begin{prop}\label{prop:upper}
    For $n \in \N$, the number of triangles with vertices in $[n] \times [n]$ having a fixed perimeter $p \in \mathbb{R}_{>0}$ is $O\brax{n^{3+O\brax{\frac{1}{\log \log n}}}}$.
\end{prop}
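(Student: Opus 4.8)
The plan is to bound the count by grouping triangles into congruence classes and estimating separately the number of classes and the number of placements of each class inside the grid. First observe that every triangle with vertices in $[n]\times[n]$ is a lattice triangle, and that each of its sides has length at most $(n-1)\sqrt{2}$, so its perimeter satisfies $p < 3\sqrt{2}\,n = O(n)$. Applying Lemma~\ref{prop:cong} with this value of $p$, the number of non-congruent lattice triangles of perimeter $p$ is $O\brax{p^{1+O\brax{\frac{1}{\log\log p}}}} = O\brax{n^{1+O\brax{\frac{1}{\log\log n}}}}$. It therefore suffices to show that a single congruence class contributes at most $O\brax{n^{2+O\brax{\frac{1}{\log\log n}}}}$ triangles to the grid, since multiplying the two estimates then yields the claimed bound.

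To bound the placements of a fixed triangle $T$ with vertices $A,B,C$, set $d=\abs{AB}^2$, a positive integer with $d \le 2n^2$. I place the image of $A$ anywhere in $[n]\times[n]$, giving at most $n^2$ choices. The image of $B$ must then be the image of $A$ translated by a lattice vector of squared length $d$; writing $r_2(d)$ for the number of lattice vectors of squared length $d$, there are at most $r_2(d)$ such choices. Once the images of $A$ and $B$ are fixed, rigidity determines the image of $C$ up to reflection across the line through them, i.e.\ at most two possibilities. Hence the number of congruent copies of $T$ inside the grid is $O\brax{n^2\, r_2(d)}$.

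It remains to control $r_2(d)$. Using the classical identity $r_2(d)=4\brax{\mathbf{d}_1(d)-\mathbf{d}_3(d)}$, where $\mathbf{d}_j$ counts the divisors of $d$ congruent to $j \Mod{4}$, we obtain $r_2(d)\le 4\,\mathbf{d}(d)$. Since $d \le 2n^2$, the maximal order of the divisor function from \cite[Theorem~1]{NiR} gives $\log \mathbf{d}(d) \le \frac{2\log d}{\log\log d} = O\brax{\frac{\log n}{\log\log n}}$, so $r_2(d) = O\brax{n^{O\brax{\frac{1}{\log\log n}}}}$. Combining, each congruence class has $O\brax{n^{2+O\brax{\frac{1}{\log\log n}}}}$ copies in the grid, and multiplying by the $O\brax{n^{1+O\brax{\frac{1}{\log\log n}}}}$ classes proves the proposition.

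The main obstacle is the placement count: a naive argument that fixes two vertices independently costs $n^4$ per class and yields only $O(n^5)$ overall. The essential saving comes from recognizing that, once one vertex and one side length are fixed, the second vertex is pinned to the $r_2(d)$ lattice vectors of that length, and that this quantity is subpolynomial by the divisor bound. Some care is also needed to merge the two separate $O\brax{\frac{1}{\log\log n}}$ error terms, one arising from Lemma~\ref{prop:cong} and one from the estimate for $r_2(d)$, into the single exponent stated.
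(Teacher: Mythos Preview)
Your proof is correct and follows essentially the same approach as the paper: bound the perimeter by $O(n)$, invoke Lemma~\ref{prop:cong} to control the number of congruence classes, and for each class place $A$ freely in $n^2$ ways, bound the choices for $B$ via Jacobi's two-square formula $r_2(d)\le 4\mathbf{d}(d)$ together with the divisor bound from \cite{NiR}, and note at most two choices for $C$. The only cosmetic differences are your explicit use of the notation $r_2(d)$ and the slightly sharper constant $3\sqrt{2}$ in place of the paper's $4$ for the perimeter bound.
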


\begin{proof}
   Note that we may assume $p<4n$, since all triangles are contained in the square circumscribing the whole grid. From Lemma~\ref{prop:cong}, the number of non-congruent lattice triangles with perimeter $p$ is $O\brax{p^{1+O\brax{\frac{1}{\log \log p}}}} = O\brax{n^{1+O\brax{\frac{1}{\log \log n}}}}$. Let $\Delta = ABC$ be a triangle from one of these congruence classes. We have at most $|[n] \times [n]| = n^2$ choices for the point $A$. Let us fix a point $A = (x_A, y_A)$. Then the coordinates $(x, y)$ of point $B$ satisfy 
    \begin{equation*}
        (x - x_A)^2 + (y - y_A)^2 = |AB|^2 < 2n^2,
    \end{equation*}
    where $|AB|$ denotes the length of the side $AB$.
    By Jacobi's two squares theorem \cite[Sec.~9.1]{BoB} (see also \cite[Theorem~2]{Ver}), the number of integer solutions $(x, y)$ to the above equation is less than or equal to $4 \mathbf{d}(|AB|^2) = O\brax{n^{O\brax{\frac{1}{\log \log n}}}}$, yielding at most as many choices for the point $B$. For each fixed pair of points $A$ and $B$, there are at most two lattice points $C$ such that the triangle $ABC$ is congruent to $\Delta$. Hence, the total number of triangles with vertices in $[n] \times [n]$ and perimeter $p$ is bounded above by
    \begin{equation*}
        O\brax{n^{1+O\brax{\frac{1}{\log \log n}}}} \cdot n^2 \cdot O\brax{n^{O\brax{\frac{1}{\log \log n}}}} \cdot 2 = O\brax{n^{3+O\brax{\frac{1}{\log \log n}}}}.
    \end{equation*}
\end{proof}

We now show that there are $\Omega(n^{5/2})$ triangles of the same perimeter in $[n] \times [n]$. Our first step is to find a lower bound on the number of Heronian triangles. We expect the estimate of Lemma~\ref{prop:hero} can be improved, at least for some special perimeters. 

\begin{lemma}
\label{prop:hero}
    For $k \in \N$, we have $H(2(2k-1)^2) = \Omega(k)$. 
\end{lemma}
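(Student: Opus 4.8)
The plan is to exhibit an explicit one-parameter family of $\Omega(k)$ pairwise non-congruent Heronian triangles, all of perimeter $2(2k-1)^2$. Writing $N = 2k-1$ (which is odd), I would restrict attention to isoceles triangles: a triangle with base $2b$ and two equal sides of length $s$. Its perimeter is $2s+2b$, and dropping the altitude to the base gives height $\sqrt{s^2-b^2}$ and area $b\sqrt{s^2-b^2}$. Such a triangle is Heronian precisely when $2b$ and $s$ are integers, $s^2-b^2$ is a perfect square, and the area is an integer. The key idea is to choose the parametrization so that these conditions become automatic while pinning the perimeter to $2N^2$.

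To fix the perimeter I would impose $s+b = N^2$. Then $s^2 - b^2 = (s-b)(s+b) = (s-b)N^2$, which is a perfect square as soon as $s-b$ is, since $N^2$ already is. So I would set $s-b = w^2$ for an integer $w$; together with $s+b = N^2$ this yields $s = (N^2+w^2)/2$, $b = (N^2-w^2)/2$, and altitude $wN$. Taking $w$ odd makes $N^2 + w^2$ even (as $N$ is odd), so $s \in \mathbb{N}$, while $N^2 - w^2$ is then even, so the base $2b = N^2 - w^2$ and the area $\tfrac12(N^2-w^2)\cdot wN$ are integers. Requiring $1 \le w < N$ keeps the triangle non-degenerate and guarantees the triangle inequality, since the base $N^2 - w^2$ is strictly less than the sum $N^2 + w^2$ of the two equal sides. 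Thus each odd $w \in [1,N)$ produces a Heronian triangle of perimeter $2N^2$.

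To conclude I would count the admissible parameters and verify distinctness. The odd integers in $[1, 2k-1)$ are $1, 3, \ldots, 2k-3$, giving $k-1$ values of $w$. Distinct values of $w$ produce distinct base lengths $N^2 - w^2$ and distinct equal-side lengths $(N^2+w^2)/2$, so the corresponding isoceles triangles have distinct side-length multisets and are therefore pairwise non-congruent. This gives $H(2(2k-1)^2) \ge k-1 = \Omega(k)$.

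I do not anticipate a serious obstacle here; the only delicate points are the parity bookkeeping that forces $w$ odd in order to keep $s$ integral, and checking the triangle inequality and non-degeneracy uniformly over the range of $w$, both of which are routine. The genuine content lies in selecting the right family: coupling $s+b = N^2$ with $s-b = w^2$ so that the factorization $s^2-b^2 = (s-b)N^2$ is forced to be a perfect square is exactly what makes the perimeter $2(2k-1)^2$ special and yields linearly many solutions.
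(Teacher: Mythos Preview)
Your argument is correct. The parity and triangle-inequality checks go through exactly as you say, and distinct odd $w\in[1,N)$ give distinct equal-side lengths $(N^2+w^2)/2$, hence $k-1$ pairwise non-congruent Heronian triangles of perimeter $2N^2$.

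Your route is genuinely different from the paper's. The paper does not restrict to isoceles triangles; instead it observes that with semi-perimeter $s=(2k-1)^2$, any solution of $x^2+y^2+z^2=s$ with $xyz\neq 0$ yields a Heronian triangle via $a=s-x^2$, $b=s-y^2$, $c=s-z^2$, and then invokes a nontrivial analytic input (a Hurwitz--Olds lower bound of $6\sqrt{s}$ for representations of $s\equiv 1\pmod 8$ as a sum of three squares, together with Jacobi's two-square theorem to discard the degenerate solutions). Your construction is more elementary and fully explicit: you force Heron's radicand to be a square not by making each $s-a_i$ a square, but by making two of them equal and the third a square, which avoids any appeal to the arithmetic of ternary quadratic forms. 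The trade-off is that the paper's framework in principle captures many more (non-isoceles) Heronian triangles of the given perimeter and would be the natural starting point if one hoped to push the bound beyond $\Omega(k)$, whereas your family already saturates the $\Omega(k)$ target with no external input.
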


\begin{proof}
    Let $k \in \N$ and $p = 2(2k-1)^2$. Let $\Delta$ be a triangle with sides $a, b, c$, and perimeter $p$. The semi-perimeter $s$ of $\Delta$ is equal to $(2k-1)^2$. The condition that $\Delta$ is a Heronian triangle is equivalent to the condition that
    \begin{equation}
    \label{eqn:area}
        s(s-a)(s-b)(s-c)
    \end{equation}
    is a perfect square, and $a,b,c \in \N$ are all strictly smaller than $s$. Note that \eqref{eqn:area} is a perfect square if each of $(s-a)$, $(s-b)$, and $(s-c)$ is a perfect square. Further, note that \begin{equation*}
        (s-a) + (s-b) + (s-c) = 3s - (a+b+c) = 3s - 2s = s.
    \end{equation*}
    Therefore, any integral solution $(x,y,z)$ to the equation
    \begin{equation*}
        x^2 + y^2 + z^2 = s = (2k-1)^2,
    \end{equation*}
    with $xyz \ne 0$ corresponds to a Heronian triangle with sides $(s-x^2)$, $(s-y^2)$, and $(s-z^2)$. By accounting for signs and permutations of variables, we have the estimates
    \begin{align}
        3! \cdot H(p) &\ge |\{(x,y,z) \in \Z^3: x^2 + y^2 + z^2 = s \text{ and } x, y, z > 0 \}|, \nonumber \\
        &= \frac{1}{2^3} |\{(x,y,z) \in \Z^3: x^2 + y^2 + z^2 = s \text{ and } xyz \ne 0 \}|, \nonumber \\
        &\ge \frac{1}{2^3} \brax{|\{(x,y,z) \in \Z^3: x^2 + y^2 + z^2 = s\}| - 3 \cdot |\{(x,y) \in \Z^2: x^2 + y^2 = s\}|}
        \label{eqn:sphere}
    \end{align}
    Note that $s \equiv 1 \; (\text{mod } 8)$ since it is the square of an odd integer. It follows from an observation of Hurwitz, verified by Olds~\cite{Old} that
    \begin{equation}
    \label{eqn:sphere-est}
        |\{(x,y,z) \in \Z^3: x^2 + y^2 + z^2 = s\}| \ge 6 \sqrt{s} = \Omega(k).
    \end{equation}
    Furthermore, by Jacobi's two squares theorem \cite[Sec.~9.1]{BoB}, 
    \begin{equation}
    \label{eqn:circle-est}
        |\{(x,y) \in \Z^2: x^2 + y^2 = s\}| \le 4 \mathbf{d}(s) = O\brax{s^{O\brax{\frac{1}{\log \log s}}}} = O\brax{k^{O\brax{\frac{1}{\log \log k}}}}.
    \end{equation}
    Finally, using estimates \eqref{eqn:sphere-est} and \eqref{eqn:circle-est} in \eqref{eqn:sphere} yields the desired result.
\end{proof}

\begin{prop}\label{prop:lower}
    For $n \in \N$, there are $\Omega(n^{\frac{5}{2}})$ triangles with vertices in $[n] \times [n]$ having the same perimeter.
\end{prop}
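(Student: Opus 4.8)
The plan is to fix a single perimeter of the form $p = 2(2k-1)^2$ and to produce $\Omega(n^{5/2})$ triangles of that perimeter inside $[n]\times[n]$, by multiplying the count of Heronian congruence classes supplied by Lemma~\ref{prop:hero} against a count of lattice placements available to each class. The first step is to pick the scale correctly: I would let $k$ be the largest integer with $(2k-1)^2 \le n/2$, so that $k = \Theta(\sqrt{n})$ and the semiperimeter $s = (2k-1)^2 \le n/2$. With $p = 2(2k-1)^2$, Lemma~\ref{prop:hero} then furnishes $H(p) = \Omega(k) = \Omega(\sqrt{n})$ pairwise non-congruent Heronian triangles of perimeter $p$.

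The crucial input, flagged earlier as the key idea, is that every Heronian triangle is a lattice triangle: a triangle with integer side lengths and integer area can always be realized with all three vertices in $\Z^2$ (a classical theorem of Yiu). I would invoke this to realize each of the $\Omega(\sqrt{n})$ congruence classes as an honest lattice triangle. The realization comes with good size control for free: in the construction of Lemma~\ref{prop:hero} each side length equals $s - x^2$, $s-y^2$, or $s-z^2$ with $x,y,z \ge 1$, hence is strictly less than $s \le n/2$. Since the diameter of a triangle is its longest side, after translating a realization into the positive quadrant it lies inside an axis-parallel box of side at most $n/2$.

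The final step is to count translates. For a fixed lattice realization whose bounding box has side at most $n/2$, translating by integer vectors $(u,v)$ with $0 \le u,v \le n/2$ keeps all three vertices inside $[n]\times[n]$, producing at least $(n/2)^2 = \Omega(n^2)$ triangles, all mutually congruent and all of perimeter $p$. Two triangles coming from different congruence classes are non-congruent and hence distinct, while two distinct integer translates of one realization are clearly distinct triangles; thus the classes contribute disjoint families. Summing over the $\Omega(\sqrt{n})$ classes gives $\Omega(\sqrt{n})\cdot\Omega(n^2) = \Omega(n^{5/2})$ triangles of the common perimeter $p$, as required. (Substituting $n \mapsto \lfloor\sqrt{n}\rfloor$ recovers the $\Omega(n^{5/4})$ bound of Theorem~\ref{gridT}.)

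I expect the main obstacle to be the embedding step rather than the counting: one must be certain that the Heronian triangles genuinely embed in $\Z^2$, and that the embedding keeps the bounding box controlled by the side lengths so that $\Omega(n^2)$ translates actually fit. The bounding-box control is automatic from the diameter bound $s \le n/2$ above, but the embedding itself is a nontrivial fact that the argument leans on entirely. Everything else—the choice $k=\Theta(\sqrt{n})$, the disjointness of the translate families across congruence classes, and the arithmetic $\Omega(\sqrt{n})\cdot\Omega(n^2)=\Omega(n^{5/2})$—is routine once that input is in place.
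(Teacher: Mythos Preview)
Your proposal is correct and follows essentially the same route as the paper: choose $k=\Theta(\sqrt{n})$ so that the semiperimeter $s=(2k-1)^2$ is at most a fixed fraction of $n$, invoke Lemma~\ref{prop:hero} for $\Omega(\sqrt{n})$ non-congruent Heronian classes, embed each in $\Z^2$ via Yiu's theorem, and count $\Omega(n^2)$ integer translates per class. The only differences are cosmetic---the paper takes $p\le n/50$ rather than $s\le n/2$ and bounds the enclosing square by $p/2$ directly rather than via the diameter argument you give---but the structure and the key inputs are identical.
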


\begin{proof}
    Suppose that $n \ge 400$. Set $k = \lfloor \sqrt{n}/20 \rfloor$ so that $p = 2(2k-1)^2 \le n/50$. From Lemma~\ref{prop:hero}, we have $H(p) = \Omega(k) = \Omega(\sqrt{n})$. Let $\Delta$ be a Heronian triangle with perimeter $p$. It follows from \cite[Theorem~1]{Yiu} that $\Delta$ is a lattice triangle. Further, any such realization can be enclosed in a lattice square having sides parallel to the coordinate axes with sidelength $p/2 \le n/100$. There are at least $(n - n/100)^2$ choices for the lower left vertex of the enclosing square. Therefore, the number of triangles with perimeter $p$ and vertices in $[n] \times [n]$ is bounded below by
    \begin{equation*}
        H(p) (n - n/100)^2 = \Omega(\sqrt{n}) \Omega(n^2) = \Omega(n^{\frac{5}{2}}).
    \end{equation*}
\end{proof}

\section*{Acknowledgments}

The authors would like to thank Michael Bennett, Jozsef Solymosi, and Joshua Zahl for helpful discussions during the preparation of this work.


\begin{thebibliography}{9}

\bibitem{bore} I. Boreico, My Favorite Problem: Linear Independence of Radicals, Harvard College Math Review, \textbf{2}(1) (2008) 87--93. 

\bibitem{BoB} J. M. Borwein, P. B. Borwein, Pi and the AGM, Wiley, New York, 1987. MR0877728

\bibitem{BMP} P. Brass, W. Moser, J. Pach, Research problems in discrete geometry, Springer, New York, 2005. MR2163782

\bibitem{CaS} R. Carr, C. O'Sullivan, On the linear independence of roots, Int. J. Number Theory \textbf{5}(1) (2009) 161--171. MR2499028

\bibitem{ERD} P. Erd\H{o}s, On sets of distances of $n$ points, Amer. Math. Monthly \textbf{53} (1946) 248--250. MR0015796

\bibitem{EP} P. Erd\H{o}s, G. Purdy, Some extremal problems in geometry, J. Combin. Theory Ser. A \textbf{10} (1971) 246--252. MR0275288

\bibitem{NiR} J.-L. Nicolas, G. Robin, Majorations explicites pour le nombre de diviseurs de $N$, Can. Math. Bull. \textbf{26}(4) (1983) 485--492. MR0716590

\bibitem{Old} C. D. Olds, On the representations, $N_3(n^2)$, Bull. Amer. Math. Soc. \textbf{47} (1941) 499--503. MR0004262

\bibitem{PaS2} J. Pach, M. Sharir, Repeated angles in the plane and related problems, J. Combin. Theory, Ser. A \textbf{59} (1992) 12--22. MR1141318

\bibitem{PaS3} J. Pach, M. Sharir, On the number of incidences between points and curves, Combin. Probab. Comput. \textbf{7} (1998) 121--127. MR1611057

\bibitem{PaS} J. Pach, M. Sharir, Geometric incidences, in Towards a Theory of Geometric Graphs, Contemp. Math. \textbf{342}, Amer. Math. Soc., Providence, RI, 2004, pp. 185--223. MR2065264

\bibitem{Pic} G. Pick, Geometrisches zur Zahlenlehre, Sitzungber. Lotos (Prague) \textbf{19} (1899) 311--319.

\bibitem{RS} O.E. Raz, M. Sharir, The number of unit-area triangles in the plane: theme and variations, Combinatorica \textbf{37} (2017) 1221--1240. MR3759915

\bibitem{RSS} O.E. Raz, M. Sharir, I.D. Shkredov, On the number of unit-area triangles spanned by convex grids in the plane, Comput. Geom. \textbf{62} (2017) 25--33. MR3603773

\bibitem{SZa} M. Sharir, J. Zahl, Cutting algebraic curves into pseudo-segments and applications, J. Comb. Theory, Ser. A \textbf{150} (2017) 1--35. MR3645567

\bibitem{Ver} V. Shelestunova, Upper bounds for the number of integral points on quadratic curves and surfaces, Ph. D. thesis, Univ. of Waterloo, Ontario, Canada, 2010.

\bibitem{Yiu} P. Yiu, Heronian triangles are lattice triangles, Am. Math. Mon. \textbf{108} (2001) 261--263. MR1834709

\end{thebibliography}
\end{document}